\newtheorem{thm}{Theorem}
\newtheorem{cor}[thm]{Corollary}
\newtheorem{prop}[thm]{Proposition}
\newtheorem{example}[thm]{Example}
\theoremstyle{remark}
\newtheorem{rem}[thm]{Remark}
\theoremstyle{definition}
\begin{document}

\title{Integral Inequalities and their Applications\\
to the Calculus of Variations on Time Scales\thanks{To 
appear in \emph{Mathematical Inequalities \& Applications} 
({\tt http://mia.ele-math.com}).}}

\author{Martin J. Bohner\thanks{Email: \texttt{bohner@mst.edu}.}\\
Department of Mathematics and Statistics\\
Missouri University of Science and Technology\\
Rolla, Missouri 65409-0020, USA
\and
Rui A. C. Ferreira\thanks{Email: \texttt{ruiacferreira@ua.pt}.
Supported by \emph{The Portuguese Foundation for Science and Technology}
(FCT) through the PhD fellowship SFRH/BD/39816/2007.}\\
Department of Engineering and Natural Sciences\\
Lusophone University of Humanities and Tecnologies\\
1749-024 Lisbon, Portugal
\and
Delfim F. M. Torres\thanks{Email: \texttt{delfim@ua.pt}.
Supported by FCT through the R\&D unit \emph{Centre for Research
on Optimization and Control} (CEOC),
cofinanced by FEDER/POCI 2010.}\\
Department of Mathematics\\
University of Aveiro\\
3810-193 Aveiro, Portugal
}

\date{{\footnotesize (Submitted: 04.11.2008; Accepted: 14.01.2010)}}

\maketitle


\begin{abstract}
We discuss the use of inequalities to obtain
the solution of certain variational problems
on time scales.

\bigskip

\noindent \textbf{Keywords:} Time scales, calculus of variations,
optimal control, integral inequalities,
Jensen's inequality, global minimizers.

\bigskip

\noindent \textbf{2000 Mathematics Subject Classification:}
26D15, 49K05, 39A10.

\end{abstract}


\section{Introduction}

A time scale, denoted by $\mathbb{T}$, is a nonempty
closed subset of the real numbers. The calculus on time scales
is a relatively new area that unifies the difference
and differential calculus, which are obtained by choosing
$\mathbb{T}=\mathbb{Z}$ or $\mathbb{T}=\mathbb{R}$, respectively.
The subject was initiated by S.~Hilger in the nineties of the XX century
\cite{Hilger90,Hilger97}, and is now under strong
current research in many different fields in which dynamic processes
can be described with discrete, continuous, or hybrid models.
For concepts and preliminary results on time scales,
we refer the reader to \cite{livro,advance}.

In this paper we start by proving some integral inequalities on time scales
involving convex functions (see Section~\ref{ineq}). These are then applied
in Section~\ref{sec:app:CV} to solve some classes of variational problems
on time scales. A simple illustrative example is given in Section~\ref{sec:ex}.
The method proposed here is direct, in the sense that it permits
to find directly the optimal solution instead of using variational arguments
and go through the usual procedure of solving the associated
delta or nabla Euler--Lagrange equations
\cite{Atici:et:al,Bohner:CV,remarks,Hilscher:Zeidan:2004}.
This is particularly useful since even simple classes of problems
of the calculus of variations on time scales lead to dynamic
Euler--Lagrange equations for which methods
to compute explicit solutions are not known.
A second advantage of the method is that it provides
directly an optimal solution, while the variational method on time scales
initiated in \cite{Bohner:CV} and further developed in
\cite{Bartosiewicz:Torres,Ferreira:Torres:HO,iso:ts,Malinowska:Torres}
is based on necessary optimality conditions, being necessary further analysis
in order to conclude if the candidate is a local minimizer, a local maximizer,
or just a saddle (see \cite{Bohner:CV} for second order necessary
and sufficient conditions). Finally, while all the previous methods
of the calculus of variations on time scales only establish local optimality,
here we provide global solutions.

The use of inequalities to solve certain classes
of optimal control problems is an old idea with a rich history
\cite{wsc,otto,Rui:Rachid:Delfim,H:L:1932,inequalities,Sbordone}.
We hope that the present study will be the beginning
of a class of direct methods for optimal control problems
on time scales, to be investigated with the help of dynamic inequalities
--- see \cite{inesurvey,Bohner:Matthews,Rui:Rachid:Delfim,GronTS,SRT,SA:T,wong}
and references therein.


\section{Integral Inequalities on Time Scales}
\label{ineq}

The first theorem is a generalization to time scales
of the well-known
\emph{Jensen inequality}. It can be found in \cite{SRT,wong}.

\begin{thm}[Generalized Jensen's inequality \cite{SRT,wong}]
\label{thm1} Let $a,b\in\mathbb{T}$ and $c,d\in\mathbb{R}$.
Suppose $f:[a,b]^\kappa_{\mathbb{T}}\rightarrow(c,d)$ is
rd-continuous and $F:(c,d)\rightarrow\mathbb{R}$ is convex.
Moreover, let $h:[a,b]^\kappa_{\mathbb{T}}\rightarrow\mathbb{R}$
be rd-continuous with
$$\int_a^b|h(t)|\Delta t>0.$$
Then,
\begin{equation}
\label{in1} \frac{\int_a^b |h(t)|F(f(t))\Delta
t}{\int_a^b|h(t)|\Delta t}\geq
F\left(\frac{\int_a^b|h(t)|f(t)\Delta t}{\int_a^b|h(t)|\Delta
t}\right).
\end{equation}
\end{thm}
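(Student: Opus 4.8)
The plan is to reduce the general weighted inequality to the classical pointwise characterization of convex functions via supporting lines. A convex function $F$ on $(c,d)$ admits, at every interior point $x_0$, a subgradient: there exists a real number $m$ (the slope of a supporting line) such that
\begin{equation*}
F(x)\geq F(x_0)+m(x-x_0)\quad\text{for all }x\in(c,d).
\end{equation*}
The strategy is to apply this inequality with the cleverly chosen point
\begin{equation*}
x_0=\frac{\int_a^b|h(t)|f(t)\Delta t}{\int_a^b|h(t)|\Delta t},
\end{equation*}
which is exactly the argument appearing on the right-hand side of \eqref{in1}, and with $x=f(t)$ for each $t$.

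First I would verify that $x_0$ indeed lies in $(c,d)$, so that the supporting line exists there. Since $f$ maps into $(c,d)$ and $|h|$ is a nonnegative weight with strictly positive total integral, $x_0$ is a weighted average of values in $(c,d)$; one must check that this average stays strictly inside the open interval. This uses that the delta integral of a nonnegative rd-continuous function preserves the relevant strict inequalities, together with rd-continuity of $f$. Next I would substitute $x=f(t)$ into the supporting-line inequality to obtain
\begin{equation*}
F(f(t))\geq F(x_0)+m\bigl(f(t)-x_0\bigr)\quad\text{for all }t\in[a,b]^\kappa_{\mathbb{T}}.
\end{equation*}

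The key step is then to multiply both sides by the nonnegative weight $|h(t)|$ and integrate over $[a,b]$ with respect to the delta integral. Because $|h(t)|\geq 0$, the inequality is preserved under multiplication, and the delta integral is monotone and linear, so
\begin{equation*}
\int_a^b|h(t)|F(f(t))\Delta t
\geq F(x_0)\int_a^b|h(t)|\Delta t
+m\left(\int_a^b|h(t)|f(t)\Delta t-x_0\int_a^b|h(t)|\Delta t\right).
\end{equation*}
By the very definition of $x_0$, the bracketed term vanishes identically, leaving the term involving $m$ with coefficient zero. Dividing through by the strictly positive quantity $\int_a^b|h(t)|\Delta t$ yields \eqref{in1}.

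The main obstacle I anticipate is not the algebra, which collapses cleanly once the supporting line is in place, but rather justifying the two analytic ingredients in the time-scales setting: that the composition $F\circ f$ and the products $|h|F(f)$ and $|h|f$ are themselves rd-continuous (hence delta-integrable), and that the weighted average $x_0$ genuinely lands in the open interval $(c,d)$ rather than at an endpoint. The former follows from continuity of $F$ on the open interval together with rd-continuity of $f$ and $h$; the latter requires a careful argument that a nonnegative weight of positive total mass cannot push a strict interior average to the boundary, which one can establish by contradiction using the strict positivity of $\int_a^b|h(t)|\Delta t$.
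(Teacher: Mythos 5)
Your argument is correct: it is the standard supporting-line proof of Jensen's inequality, and while the paper does not actually prove Theorem~\ref{thm1} itself (it is quoted from \cite{SRT,wong}), the identical device --- a supporting line of $F$ at the weighted mean $x_0$, multiplied by $|h(t)|\geq 0$ and delta-integrated so that the slope term cancels by the definition of $x_0$ --- is exactly what the paper uses to establish the equality case in Proposition~\ref{rem0}, so your route is essentially the paper's own. The one point needing care, which you rightly flag, is that $x_0\in(c,d)$; this goes through as you suggest, since for instance $x_0=d$ would force $\int_a^b|h(t)|(d-f(t))\Delta t=0$ with a nonnegative rd-continuous integrand, hence $|h(t)|(d-f(t))\equiv 0$ and so $h\equiv 0$, contradicting $\int_a^b|h(t)|\Delta t>0$.
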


\begin{prop}
\label{rem0}
If $F$ in Theorem~\ref{thm1} is strictly convex
and $h(t)\neq 0$ for all
$t\in[a,b]^\kappa_\mathbb{T}$, then equality in \eqref{in1} holds
if and only if $f$ is constant.
\end{prop}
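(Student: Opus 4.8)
The plan is to revisit the supporting-line proof of Theorem~\ref{thm1} and track exactly where the inequality can fail to be strict. Write $m:=\int_a^b|h(t)|\Delta t>0$ and set
$$\xi:=\frac{1}{m}\int_a^b|h(t)|f(t)\Delta t,$$
which lies in $(c,d)$ because $f$ is rd-continuous on the compact interval $[a,b]_\mathbb{T}$ and hence attains its finite extrema inside $(c,d)$, while $\xi$ is a weighted average of the values of $f$. The forward implication is immediate: if $f\equiv k$ is constant, then $\xi=k$ and both sides of \eqref{in1} equal $F(k)$, so equality holds.

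For the converse, I would use that a convex function admits a supporting line at every interior point: there is a constant $\beta\in\mathbb{R}$ with
$$F(x)\geq F(\xi)+\beta\,(x-\xi)\quad\text{for all }x\in(c,d),$$
and this is precisely where strict convexity enters, since it guarantees that the inequality is \emph{strict} whenever $x\neq\xi$. Substituting $x=f(t)$, multiplying by $|h(t)|\geq 0$, and integrating over $[a,b]_\mathbb{T}$ reproduces \eqref{in1}, with the term $\beta\int_a^b|h(t)|(f(t)-\xi)\Delta t$ vanishing by the definition of $\xi$. Consequently, equality in \eqref{in1} is equivalent to
$$\int_a^b|h(t)|\,\bigl[F(f(t))-F(\xi)-\beta(f(t)-\xi)\bigr]\,\Delta t=0.$$

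The integrand is rd-continuous and, by the supporting-line inequality, pointwise nonnegative. The crux of the argument is then the following time-scale fact: if $\phi:[a,b]^\kappa_\mathbb{T}\to\mathbb{R}$ is rd-continuous with $\phi\geq 0$ and $\int_a^b\phi(t)\Delta t=0$, then $\phi\equiv 0$. I would prove this by considering the antiderivative $\Phi(t):=\int_a^t\phi(s)\Delta s$, which satisfies $\Phi^\Delta=\phi\geq 0$ and is therefore nondecreasing, with $\Phi(a)=\Phi(b)=0$; monotonicity forces $\Phi\equiv 0$ on $[a,b]_\mathbb{T}$ and hence $\phi=\Phi^\Delta\equiv 0$. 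Applying this to $\phi(t)=|h(t)|\bigl[F(f(t))-F(\xi)-\beta(f(t)-\xi)\bigr]$ and using the hypothesis $h(t)\neq 0$ (so $|h(t)|>0$) gives $F(f(t))-F(\xi)-\beta(f(t)-\xi)=0$ for every $t$; strict convexity then forces $f(t)=\xi$ for all $t$, i.e.\ $f$ is constant.

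The step I expect to require the most care is the zero-integral lemma in the time-scale setting: unlike on $\mathbb{R}$, one cannot appeal directly to continuity to pass from a vanishing integral to a vanishing integrand, and at right-scattered points the integral only registers the weighted value $\mu(t)\phi(t)$. The antiderivative/monotonicity route sketched above circumvents this cleanly, but one should make sure the monotonicity theorem ($\Phi^\Delta\geq 0\Rightarrow\Phi$ nondecreasing) and the identity $\Phi^\Delta=\phi$ for rd-continuous $\phi$ are invoked with their correct hypotheses.
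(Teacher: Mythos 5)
Your proof is correct, but its key technical step differs from the paper's. Both arguments hinge on the supporting line of $F$ at the weighted mean ($\xi$ in your notation, $x_0$ in the paper's) and reduce matters to showing that a nonnegative rd-continuous function with vanishing delta integral must vanish identically. The paper does this by contraposition with a case split: it fixes $t_0$ with $f(t_0)\neq x_0$ and treats separately the right-dense case (where rd-continuity gives $f\neq x_0$ on an interval $[t_0,t_0+\delta)_{\mathbb{T}}$ of positive measure) and the right-scattered case (where the identity $\int_{t_0}^{\sigma(t_0)}g(t)\,\Delta t=\mu(t_0)g(t_0)$ makes the single point contribute strictly positively), in each case bounding the integral strictly away from zero. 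You instead package the time-scale subtlety into a vanishing lemma proved via the antiderivative: $\Phi^\Delta=\phi\geq 0$ makes $\Phi$ nondecreasing, $\Phi(a)=\Phi(b)=0$ forces $\Phi\equiv 0$, hence $\phi=\Phi^\Delta\equiv 0$ on $[a,b]^\kappa_{\mathbb{T}}$. Your route avoids the dense/scattered dichotomy (it is absorbed into the fundamental theorem of calculus and the monotonicity theorem on time scales, both standard and invoked with the right hypotheses), produces a reusable lemma, and then finishes pointwise using $|h(t)|>0$ and strict convexity exactly as the paper does at the single witness point. Both proofs rest on the same standard, unproved fact that a strictly convex function lies strictly above a supporting line away from the contact point, and both use the hypothesis $h\neq 0$ in an essential way, so I see no gap in your argument.
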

\begin{proof}
Consider $x_0\in(c,d)$ defined by
$$x_0=\frac{\int_a^b|h(t)|f(t)\Delta t}{\int_a^b|h(t)|\Delta
t}.$$ From the definition of strict convexity,
there exists $m\in\mathbb{R}$ such that
$$F(x)-F(x_0)>m(x-x_0)$$
for all $x\in (c,d)\backslash\{x_0\}$. Assume
$f$ is not constant. Then, $f(t_0)\neq x_0$ for some
$t_0\in[a,b]^\kappa_{\mathbb{T}}$. We split the proof in two
cases. (i) Assume that $t_0$ is right-dense. Then, since $f$ is
rd-continuous, we have that $f(t)\neq x_0$ on
$[t_0,t_0+\delta)_\mathbb{T}$ for some $\delta>0$. Hence,
\begin{align*}
\int_a^b |h(t)|F(f(t))\Delta t-\int_a^b |h(t)|\Delta t
F(x_0)&=\int_a^b |h(t)|[F(f(t))-F(x_0)]\Delta t\\
&>m\int_a^b |h(t)|[f(t)-x_0]\Delta t\\ &=0 \, .
\end{align*}
(ii) Assume now that $t_0$ is right-scattered. Then (note that
$\int_{t_0}^{\sigma(t_0)} f(t)\Delta t=\mu(t_0)f(t_0)$),
\begin{equation*}
\begin{split}
\int_a^b & |h(t)|F(f(t))\Delta t-\int_a^b |h(t)|\Delta t
F(x_0)\\
&= \int_a^b |h(t)|[F(f(t))-F(x_0)]\Delta t\\
&= \int_a^{t_0}
|h(t)|[F(f(t))-F(x_0)]\Delta t+\int_{t_0}^{\sigma(t_0)}
|h(t)|[F(f(t))-F(x_0)]\Delta t\\
& \qquad +\int_{\sigma(t_0)}^{b}
|h(t)|[F(f(t))-F(x_0)]\Delta t\\
&> \int_a^{t_0} |h(t)|[F(f(t))-F(x_0)]\Delta
t+m\int_{t_0}^{\sigma(t_0)} |h(t)|[f(t)-x_0]\Delta t\\
& \qquad +\int_{\sigma(t_0)}^{b} |h(t)|[F(f(t))-F(x_0)]\Delta t\\
&\geq m\left\{\int_{a}^{t_0} |h(t)|[f(t)-x_0]\Delta
t+\int_{t_0}^{\sigma(t_0)} |h(t)|[f(t)-x_0]\Delta t\right.\\
& \qquad \left.+\int_{\sigma(t_0)}^{b} |h(t)|[f(t)-x_0]\Delta t\right\}\\
&= m\int_a^b|h(t)|[f(t)-x_0]\Delta t =0 \, .
\end{split}
\end{equation*}
Finally, if $f$ is constant, it is obvious that the equality in
\eqref{in1} holds.
\end{proof}

\begin{rem}
If $F$ in Theorem~\ref{thm1} is a concave function, then the
inequality sign in \eqref{in1} must be reversed. Obviously,
Proposition~\ref{rem0} remains true if we let $F$
be strictly concave.
\end{rem}

Before proceeding, we state two particular cases of
Theorem~\ref{thm1}.

\begin{cor}
Let $a,b,c,d\in\mathbb{R}$. Suppose $f:[a,b]\rightarrow(c,d)$ is
continuous and $F:(c,d)\rightarrow\mathbb{R}$ is convex. Moreover,
let $h:[a,b]\rightarrow\mathbb{R}$ be continuous with
$$\int_a^b|h(t)|dt>0.$$
Then,
\begin{equation*}
\frac{\int_a^b |h(t)|F(f(t))dt}{\int_a^b|h(t)|dt}\geq
F\left(\frac{\int_a^b|h(t)|f(t)dt}{\int_a^b|h(t)|dt}\right).
\end{equation*}
\end{cor}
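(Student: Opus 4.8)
The final statement is a corollary of the generalized Jensen inequality (Theorem~\ref{thm1}) obtained by specializing the time scale to the real line.

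\medskip

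The plan is to recognize that the continuous inequality is simply the instance $\mathbb{T}=\mathbb{R}$ of Theorem~\ref{thm1}, and to verify that all of its hypotheses are inherited by the continuous setting. First I would recall that when $\mathbb{T}=\mathbb{R}$ the forward jump operator satisfies $\sigma(t)=t$, so that $[a,b]^\kappa_{\mathbb{T}}=[a,b]$ and every point is right-dense; consequently there are no right-scattered points to account for. Next I would note that on $\mathbb{R}$ the notion of rd-continuity reduces to ordinary continuity, so the continuity hypotheses placed on $f$ and $h$ in the corollary are precisely the specializations of the rd-continuity hypotheses in the theorem. Finally, I would observe that the delta integral $\int_a^b(\cdot)\,\Delta t$ coincides with the usual Riemann integral $\int_a^b(\cdot)\,dt$ when $\mathbb{T}=\mathbb{R}$.

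\medskip

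With these identifications in place, the substitution $\mathbb{T}=\mathbb{R}$ turns the hypothesis $\int_a^b|h(t)|\Delta t>0$ into $\int_a^b|h(t)|\,dt>0$ and transforms inequality~\eqref{in1} verbatim into the desired inequality. Since $F$ convex and $f$ mapping into $(c,d)$ carry over unchanged, the conclusion follows by a direct application of Theorem~\ref{thm1}.

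\medskip

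There is essentially no obstacle here beyond correctly invoking the standard dictionary between time-scale calculus on $\mathbb{R}$ and ordinary real calculus; the only point requiring a word of care is confirming that rd-continuity degenerates to continuity and that the delta integral degenerates to the Riemann integral, both of which are standard facts found in the references \cite{livro,advance}. The proof is therefore a one-line deduction: apply Theorem~\ref{thm1} with $\mathbb{T}=\mathbb{R}$.
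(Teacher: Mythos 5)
Your proof is correct and matches the paper exactly: the paper's proof is the one-line "Choose $\mathbb{T}=\mathbb{R}$ in Theorem~\ref{thm1}," and your additional remarks about rd-continuity and the delta integral reducing to their classical counterparts are just the standard justification for that specialization.
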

\begin{proof}
Choose $\mathbb{T}=\mathbb{R}$ in Theorem~\ref{thm1}.
\end{proof}

\begin{cor}
Let $a=q^n$ and $b=q^m$ for some $n,m\in\mathbb{N}_0$ with $n<m$.
Define $f$ and $h$ on $[q^n,q^{m-1}]_{q^{\mathbb{N}_0}}$ and
assume $F:(c,d)\rightarrow\mathbb{R}$ is convex, where
$(c,d)\supset[f(q^n),f(q^{m-1})]_{q^{\mathbb{N}_0}}$. If
$$\sum_{k=m}^{n-1}q^k(q-1)|h(q^k)|>0,$$
then
\begin{equation*}
\frac{\sum_{k=m}^{n-1}q^k
|h(q^k)|F(f(q^k))}{\sum_{k=m}^{n-1}q^k|h(q^k)|}\geq
F\left(\frac{\sum_{k=m}^{n-1}q^k|h(q^k)|f(q^k)}{\sum_{k=m}^{n-1}q^k|h(q^k)|}\right).
\end{equation*}
\end{cor}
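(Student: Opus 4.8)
The plan is to derive this corollary as the $q$-calculus specialization of Theorem~\ref{thm1}, exactly as the preceding corollary specialized it to $\mathbb{T}=\mathbb{R}$. The relevant time scale here is $\mathbb{T}=q^{\mathbb{N}_0}=\{q^k:k\in\mathbb{N}_0\}$ with $q>1$, on which every point is right-scattered with forward jump $\sigma(t)=qt$ and graininess $\mu(t)=(q-1)t$. So the whole strategy is: read off the correct interval, check that the hypotheses of Theorem~\ref{thm1} specialize correctly, and rewrite each delta integral as a finite sum.

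First, I would identify the interval together with its $\kappa$-restriction. With $a=q^n$ and $b=q^m$, $n<m$, the right endpoint $b=q^m$ is left-scattered (its immediate predecessor is $q^{m-1}$), so $[a,b]^\kappa_{\mathbb{T}}=\{q^n,q^{n+1},\ldots,q^{m-1}\}$, which is precisely the domain on which $f$ and $h$ are given. Since this set is finite and every point is isolated, both $f$ and $h$ are automatically rd-continuous, so the regularity hypotheses of Theorem~\ref{thm1} are met once one notes that the inclusion $(c,d)\supset\{f(q^k):n\le k\le m-1\}$ guarantees $f$ maps into $(c,d)$, while $F$ is convex by assumption.

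Next, I would convert each delta integral into a finite sum. On $q^{\mathbb{N}_0}$ one has the standard formula
\begin{equation*}
\int_a^b g(t)\,\Delta t=\sum_{k=n}^{m-1}\mu(q^k)\,g(q^k)=(q-1)\sum_{k=n}^{m-1}q^k g(q^k).
\end{equation*}
Applying this to $g=|h|$, to $g=|h|\,(F\circ f)$, and to $g=|h|\,f$ turns the hypothesis $\int_a^b|h(t)|\,\Delta t>0$ into the stated positivity condition and turns inequality \eqref{in1} into the claimed discrete inequality. The common factor $(q-1)$ appears in every sum and cancels from each of the three ratios, producing exactly the asserted form.

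The proof therefore carries no genuine obstacle beyond bookkeeping; the only point demanding care is the index range. Because $n<m$ and $q>1$, the correct summation limits run from $k=n$ to $k=m-1$ (matching the domain $[q^n,q^{m-1}]$ and the integral formula above), so I would write every sum as $\sum_{k=n}^{m-1}$ rather than $\sum_{k=m}^{n-1}$, the latter being an empty or reversed range.
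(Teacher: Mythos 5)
Your proposal is correct and is essentially the paper's own proof, which simply reads ``Choose $\mathbb{T}=q^{\mathbb{N}_0}=\{q^{k}:k\in\mathbb{N}_0\}$, $q>1$, in Theorem~\ref{thm1}''; you have merely filled in the routine conversion of the delta integrals to sums via $\mu(t)=(q-1)t$. Your observation that the summation limits should read $\sum_{k=n}^{m-1}$ rather than $\sum_{k=m}^{n-1}$ is also right --- that is a typographical slip in the paper's statement, not a flaw in your argument.
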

\begin{proof}
Choose $\mathbb{T}=q^{\mathbb{N}_0}=\{q^{k}:k\in\mathbb{N}_0\}$,
$q>1$, in Theorem~\ref{thm1}.
\end{proof}

Jensen's inequality \eqref{in0} is
proved in \cite[Theorem~4.1]{inesurvey}.

\begin{thm}
\label{thm0} Let $a,b\in\mathbb{T}$ and $c,d\in\mathbb{R}$.
Suppose $f:[a,b]^\kappa_{\mathbb{T}}\rightarrow(c,d)$ is
rd-continuous and $F:(c,d)\rightarrow\mathbb{R}$ is convex
(resp., concave). Then,
\begin{equation}
\label{in0} \frac{\int_a^b F(f(t))\Delta t}{b-a}\geq
F\left(\frac{\int_a^bf(t)\Delta t}{b-a}\right)
\end{equation}
(resp., the reverse inequality). Moreover, if $F$ is strictly
convex or strictly concave, then equality in \eqref{in0} holds if and
only if $f$ is constant.
\end{thm}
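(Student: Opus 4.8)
The plan is to obtain Theorem~\ref{thm0} as an immediate specialization of the weighted Jensen inequality of Theorem~\ref{thm1}, together with Proposition~\ref{rem0} for the equality case. The key observation is that the unweighted statement \eqref{in0} is exactly \eqref{in1} with the weight function chosen to be constant. So first I would set $h(t)\equiv 1$ on $[a,b]^\kappa_{\mathbb{T}}$. Then $|h(t)|=1$ identically, and since $a<b$ we have
\[
\int_a^b |h(t)|\Delta t=\int_a^b 1\,\Delta t=b-a>0,
\]
so the positivity hypothesis of Theorem~\ref{thm1} is satisfied. Substituting $|h(t)|=1$ into \eqref{in1} collapses the weighted averages into ordinary averages and yields precisely \eqref{in0} in the convex case.

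For the concave case I would simply apply the convex result to $-F$: if $F$ is concave then $-F$ is convex, and \eqref{in0} for $-F$ gives, after multiplying through by $-1$, the reversed inequality for $F$. (Alternatively one may invoke the Remark following Proposition~\ref{rem0}, which already records that the inequality sign reverses for concave $F$.) This disposes of the main inequality in both cases with essentially no computation.

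For the equality statement I would invoke Proposition~\ref{rem0}. With $h\equiv 1$ the condition ``$h(t)\neq 0$ for all $t\in[a,b]^\kappa_{\mathbb{T}}$'' holds trivially, so when $F$ is strictly convex, Proposition~\ref{rem0} gives that equality in \eqref{in1} --- hence in \eqref{in0} --- holds if and only if $f$ is constant. For strictly concave $F$ the same conclusion follows by applying the strictly convex case to $-F$, exactly as noted in the Remark. The only point requiring a word of care is to confirm that the chosen constant weight meets all hypotheses of the two results being cited: rd-continuity of $h\equiv 1$ is obvious, and the strict inequality $b-a>0$ needs $a<b$, which is implicit in writing the interval $[a,b]^\kappa_{\mathbb{T}}$ with $\int_a^b\Delta t=b-a$.

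I do not anticipate a genuine obstacle here, since the theorem is a corollary of machinery already established; the ``hard part'' is purely bookkeeping, namely making sure the specialization $h\equiv 1$ is clearly justified and that the concave and strict-concave cases are handled by the reflection $F\mapsto -F$ rather than re-proving anything. In short, the entire proof should read as: take $h\equiv 1$ in Theorem~\ref{thm1} to get \eqref{in0}, pass to $-F$ for the concave case, and apply Proposition~\ref{rem0} (again with $h\equiv 1$) for the equality characterization.
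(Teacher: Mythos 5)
Your proposal is correct and coincides with the paper's own proof, which simply observes that Theorem~\ref{thm0} is the special case $h(t)\equiv 1$ of Theorem~\ref{thm1} and Proposition~\ref{rem0}; the reduction of the concave case to $-F$ is the same device recorded in the Remark following Proposition~\ref{rem0}. Your version merely spells out the bookkeeping ($\int_a^b 1\,\Delta t = b-a>0$, rd-continuity of the constant weight) that the paper leaves implicit.
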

\begin{proof}
This is a particular case of Theorem~\ref{thm1}
and Proposition~\ref{rem0}
with $h(t)=1$ for all $t\in[a,b]^\kappa_{\mathbb{T}}$.
\end{proof}

We now state and prove some consequences of Theorem~\ref{thm0}.

\begin{cor}
\label{cor0} Let $a,b\in\mathbb{T}$ and $c,d\in\mathbb{R}$.
Suppose $f:[a,b]^\kappa_{\mathbb{T}}\rightarrow(c,d)$ is
rd-continuous and $F:(c,d)\rightarrow\mathbb{R}$ is such that
$F''\geq 0$ (resp., $F''\leq 0$). Then,
\begin{equation}
\label{in2} \frac{\int_a^b F(f(t))\Delta t}{b-a}\geq
F\left(\frac{\int_a^bf(t)\Delta t}{b-a}\right)
\end{equation}
(resp., the reverse inequality). Furthermore, if $F''>0$ or
$F''<0$, equality in \eqref{in2} holds if and only if $f$ is
constant.
\end{cor}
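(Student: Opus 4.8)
The plan is to derive Corollary~\ref{cor0} directly from Theorem~\ref{thm0}, whose hypotheses are phrased in terms of convexity (resp.\ concavity) rather than the sign of the second derivative. Thus the entire task reduces to the standard analytic fact that a twice-differentiable function $F$ on the open interval $(c,d)$ satisfies $F''\geq 0$ on $(c,d)$ if and only if $F$ is convex on $(c,d)$, with the strict condition $F''>0$ forcing strict convexity. Once this implication is in hand, the inequality~\eqref{in2} and its equality characterization follow verbatim from~\eqref{in0} in Theorem~\ref{thm0}.

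First I would record that $F''\geq 0$ implies $F$ is convex. The cleanest route is to invoke the characterization already granted to us: apply the Taylor/mean-value argument showing that $F''\geq 0$ makes $F'$ nondecreasing, hence $F$ convex in the sense used by Theorem~\ref{thm0}. Concretely, for $x_0,x\in(c,d)$ one writes, by the mean value theorem (or a first-order Taylor expansion with Lagrange remainder),
\begin{equation*}
F(x)-F(x_0)=F'(x_0)(x-x_0)+\tfrac{1}{2}F''(\xi)(x-x_0)^2
\end{equation*}
for some $\xi$ strictly between $x$ and $x_0$, so that $F''\geq 0$ gives $F(x)-F(x_0)\geq F'(x_0)(x-x_0)$, which is exactly the supporting-line inequality defining convexity. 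When $F''>0$ the same expansion yields strict inequality for $x\neq x_0$, which is precisely the notion of strict convexity used in Proposition~\ref{rem0} and Theorem~\ref{thm0}.

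With convexity established, I would simply quote Theorem~\ref{thm0}: since $F''\geq 0$ makes $F$ convex, \eqref{in0} gives \eqref{in2}; and since $F''>0$ makes $F$ strictly convex, the equality clause of Theorem~\ref{thm0} gives that equality in~\eqref{in2} holds if and only if $f$ is constant. The concave cases ($F''\leq 0$ and $F''<0$) follow identically by applying the argument to $-F$, or equivalently by the reversed-inequality part of Theorem~\ref{thm0}.

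I do not anticipate a genuine obstacle here, as the result is a routine specialization. The only point requiring any care is purely a matter of hypotheses: Theorem~\ref{thm0} is stated for $F$ merely convex, whereas Corollary~\ref{cor0} assumes the stronger differentiability that lets us test convexity through $F''$. One should therefore be mindful that $F$ is tacitly assumed twice differentiable on $(c,d)$ so that the Taylor expansion above is legitimate; given that, the deduction is immediate and no new time-scale machinery is needed beyond what Theorem~\ref{thm0} already provides.
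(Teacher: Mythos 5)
Your proposal is correct and follows essentially the same route as the paper: the paper's proof likewise reduces Corollary~\ref{cor0} to Theorem~\ref{thm0} by citing the standard fact that $F''\geq 0$ (resp.\ $F''>0$) implies convexity (resp.\ strict convexity), which you simply justify more explicitly via the Taylor expansion with Lagrange remainder. No discrepancy in substance.
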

\begin{proof}
This follows immediately from Theorem \ref{thm0} and the facts
that a function $F$ with $F''\geq 0$ (resp., $F''\leq 0$) is
convex (resp., concave) and with $F''>0$ (resp., $F''\leq 0$) is
strictly convex (resp., strictly concave).
\end{proof}

\begin{cor}
Let $a,b\in\mathbb{T}$ and $c,d\in\mathbb{R}$. Suppose
$f:[a,b]^\kappa_{\mathbb{T}}\rightarrow(c,d)$ is rd-continuous and
$\varphi,\psi:(c,d)\rightarrow\mathbb{R}$ are continuous functions such
that $\varphi^{-1}$ exists, $\psi$ is strictly increasing, and
$\psi\circ\varphi^{-1}$ is convex (resp., concave) on
\textup{Im}($\varphi$). Then,
\begin{equation*}
\psi^{-1}\left(\frac{\int_a^b \psi(f(t))\Delta t}{b-a}\right)\geq
\varphi^{-1}\left(\frac{\int_a^b \varphi(f(t))\Delta
t}{b-a}\right)
\end{equation*}
(resp., the reverse inequality). Furthermore, if
$\psi\circ\varphi^{-1}$ is strictly convex or strictly concave,
the equality holds if and only if $f$ is constant.
\end{cor}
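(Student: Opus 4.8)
The plan is to reduce this statement to Theorem~\ref{thm0} by composing $f$ with $\varphi$ and taking the convex function in Theorem~\ref{thm0} to be $F:=\psi\circ\varphi^{-1}$. First I would set $g:=\varphi\circ f$. Since $f$ is rd-continuous and $\varphi$ is continuous, $g$ is rd-continuous; moreover, because $\varphi$ is a continuous injection on the open interval $(c,d)$ (its inverse exists), its image $\textup{Im}(\varphi)$ is again an open interval, and $g$ takes values there. By hypothesis $F=\psi\circ\varphi^{-1}$ is convex (resp.\ concave) on $\textup{Im}(\varphi)$, so all the hypotheses of Theorem~\ref{thm0} are satisfied with $g$ playing the role of the integrand and $\textup{Im}(\varphi)$ playing the role of $(c,d)$.

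Next I would apply Theorem~\ref{thm0} to $g$ and $F$ and simplify the two sides using the identities $F(g(t))=\psi(\varphi^{-1}(\varphi(f(t))))=\psi(f(t))$ and $g(t)=\varphi(f(t))$. In the convex case this yields
$$\frac{\int_a^b \psi(f(t))\Delta t}{b-a}\geq \psi\left(\varphi^{-1}\left(\frac{\int_a^b \varphi(f(t))\Delta t}{b-a}\right)\right).$$
The decisive final step is to apply $\psi^{-1}$ to both sides: since $\psi$ is strictly increasing, so is $\psi^{-1}$, hence the inequality direction is preserved and we obtain exactly the asserted inequality. The concave case is identical in structure: a concave $F$ reverses the inequality coming from Theorem~\ref{thm0}, and applying the still-increasing $\psi^{-1}$ preserves that reversed direction.

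For the equality assertion I would invoke the equality clause of Theorem~\ref{thm0}: when $F$ is strictly convex (or strictly concave), Jensen's inequality for $g$ is an equality if and only if $g$ is constant. Because $\psi^{-1}$ is strictly monotone, equality in the final inequality is equivalent to equality in the intermediate inequality above, hence to $g=\varphi\circ f$ being constant; and since $\varphi$ is injective, this is equivalent to $f$ being constant. The one point demanding care throughout is the bookkeeping of inequality directions, which is why the strict monotonicity of $\psi$ is essential (so that $\psi^{-1}$ preserves order) and the injectivity of $\varphi$ is essential (so that constancy of $\varphi\circ f$ transfers back to $f$); neither $\varphi$ nor $\psi$ individually need be convex, only the composite $\psi\circ\varphi^{-1}$.
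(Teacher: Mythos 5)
Your proposal is correct and follows essentially the same route as the paper: apply Theorem~\ref{thm0} to $g=\varphi\circ f$ with $F=\psi\circ\varphi^{-1}$, then use the strict monotonicity of $\psi$ to pass to $\psi^{-1}$, and transfer constancy of $\varphi\circ f$ back to $f$ via injectivity of $\varphi$. Your added remark that $\textup{Im}(\varphi)$ is an open interval (so that Theorem~\ref{thm0} genuinely applies) is a small point of care the paper leaves implicit, but otherwise the two arguments coincide.
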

\begin{proof}
Since $\varphi$ is continuous and $\varphi\circ f$ is rd-continuous,
it follows from Theorem~\ref{thm0} with $f=\varphi\circ f$ and
$F=\psi\circ\varphi^{-1}$ that
$$\frac{\int_a^b (\psi\circ\varphi^{-1})((\varphi\circ
f)(t))\Delta t}{b-a}\geq
(\psi\circ\varphi^{-1})\left(\frac{\int_a^b (\varphi\circ
f)(t)\Delta t}{b-a}\right).$$ Since $\psi$ is strictly increasing,
we obtain
$$\psi^{-1}\left(\frac{\int_a^b \psi(f(t))\Delta
t}{b-a}\right)\geq
\varphi^{-1}\left(\frac{\int_a^b \varphi(f(t))\Delta
t}{b-a}\right).$$ Finally, when $\psi\circ\varphi^{-1}$ is
strictly convex, the equality holds if and only if $\varphi\circ
f$ is constant, or equivalently (since $\varphi$ is invertible),
$f$ is constant. The case when $\psi\circ\varphi^{-1}$ is concave
is treated analogously.
\end{proof}

\begin{cor}
\label{cor1} Assume
$f:[a,b]^\kappa_{\mathbb{T}}\rightarrow\mathbb{R}$ is
rd-continuous and positive. If $\alpha<0$ or $\alpha>1$, then
\begin{equation*}
\int_a^b (f(t))^\alpha\Delta t\geq(b-a)^{1-\alpha}\left(\int_a^b
f(t)\Delta t\right)^\alpha \, .
\end{equation*}
If $0<\alpha<1$, then
\begin{equation*}
\int_a^b (f(t))^\alpha\Delta t\leq(b-a)^{1-\alpha}\left(\int_a^b
f(t)\Delta t\right)^\alpha \, .
\end{equation*}
Furthermore, in both cases equality holds if and only if $f$ is
constant.
\end{cor}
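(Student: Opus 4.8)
The plan is to derive Corollary~\ref{cor1} as a direct application of Corollary~\ref{cor0} by choosing the function $F(x)=x^\alpha$ and analyzing the sign of its second derivative on the appropriate domain. Since $f$ is assumed positive and rd-continuous on $[a,b]^\kappa_{\mathbb{T}}$, its image is contained in $(0,\infty)$, so I would take $(c,d)=(0,\infty)$ and set $F(x)=x^\alpha$ on this interval. Computing $F''(x)=\alpha(\alpha-1)x^{\alpha-2}$, the factor $x^{\alpha-2}$ is strictly positive for all $x>0$, so the sign of $F''$ is governed entirely by the sign of $\alpha(\alpha-1)$.

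Next I would split into the two stated cases according to this sign. When $\alpha<0$ or $\alpha>1$, the product $\alpha(\alpha-1)$ is positive, hence $F''>0$ and $F$ is strictly convex; applying the convex case of Corollary~\ref{cor0} gives
\begin{equation*}
\frac{\int_a^b (f(t))^\alpha\Delta t}{b-a}\geq
\left(\frac{\int_a^bf(t)\Delta t}{b-a}\right)^\alpha.
\end{equation*}
When $0<\alpha<1$, the product $\alpha(\alpha-1)$ is negative, so $F''<0$, $F$ is strictly concave, and the reverse inequality holds. In each case I would then clear denominators: multiplying both sides by $b-a$ and observing that $\left(\frac{1}{b-a}\right)^\alpha=(b-a)^{-\alpha}$, the factor $(b-a)\cdot(b-a)^{-\alpha}=(b-a)^{1-\alpha}$ emerges on the right-hand side, yielding exactly the two claimed inequalities. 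The equality statement follows immediately from the strict convexity (resp.\ strict concavity) of $F$ together with the equality clause of Corollary~\ref{cor0}, since $F''\neq 0$ throughout $(0,\infty)$ in both cases.

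There is no serious obstacle here; the argument is a routine specialization. The only point requiring a moment of care is the algebraic rearrangement of the $(b-a)$ factors to reach the exact form $(b-a)^{1-\alpha}$, and the verification that the hypotheses of Corollary~\ref{cor0} are genuinely met---namely that $f$ maps into the open interval $(0,\infty)$ on which $F$ is defined and twice differentiable, which is guaranteed by the positivity of $f$. I would remark explicitly that the boundary values $\alpha=0$ and $\alpha=1$ are excluded precisely because they make $F$ affine (with $F''\equiv 0$), for which the inequality degenerates to an equality and the strict-convexity conclusion fails.
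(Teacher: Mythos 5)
Your proposal is correct and follows exactly the paper's own argument: take $F(x)=x^\alpha$ on $(0,\infty)$, note $F''(x)=\alpha(\alpha-1)x^{\alpha-2}$ is positive when $\alpha<0$ or $\alpha>1$ and negative when $0<\alpha<1$, and apply Corollary~\ref{cor0} together with its equality clause. The only difference is that you spell out the routine rearrangement of the $(b-a)$ factors, which the paper leaves implicit.
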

\begin{proof}
Define $F(x)=x^\alpha,\ x>0$. Then
$$F''(x)=\alpha(\alpha-1)x^{\alpha-2},\quad x>0.$$
Hence, when $\alpha<0$ or $\alpha>1$, $F''>0$, \textrm{i.e.}, $F$ is
strictly convex. When $0<\alpha<1$, $F''<0$, \textrm{i.e.}, $F$ is strictly
concave. Applying Corollary \ref{cor0} with this function $F$, we
obtain the above inequalities with equality if and only if $f$ is
constant.
\end{proof}

\begin{cor}
Assume $f:[a,b]^\kappa_{\mathbb{T}}\rightarrow\mathbb{R}$ is
rd-continuous and positive. If $\alpha<-1$ or $\alpha>0$, then
\begin{equation*}
\left(\int_a^b \frac{1}{f(t)}\Delta t\right)^\alpha\int_a^b
(f(t))^\alpha\Delta t\geq(b-a)^{1+\alpha} \, .
\end{equation*}
If $-1<\alpha<0$, then
\begin{equation*}
\left(\int_a^b \frac{1}{f(t)}\Delta t\right)^\alpha\int_a^b
(f(t))^\alpha\Delta t\leq(b-a)^{1+\alpha}.
\end{equation*}
Furthermore, in both cases the equality holds if and only if $f$
is constant.
\end{cor}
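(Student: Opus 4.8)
The plan is to derive this corollary directly from Corollary~\ref{cor1}, applied not to $f$ itself but to its reciprocal. First I would set $g=1/f$; since $f$ is rd-continuous and positive, so is $g$, and the key algebraic identities are $g(t)^{-\alpha}=f(t)^\alpha$ together with $\int_a^b g(t)\Delta t=\int_a^b \frac{1}{f(t)}\Delta t$. The idea is then to invoke Corollary~\ref{cor1} for the function $g$ with the exponent $-\alpha$ playing the role of the corollary's exponent. Writing $\gamma=-\alpha$, its conclusion becomes
\begin{equation*}
\int_a^b f(t)^\alpha\Delta t \;\;\lessgtr\;\; (b-a)^{1-\gamma}\left(\int_a^b \frac{1}{f(t)}\Delta t\right)^{\gamma}
=(b-a)^{1+\alpha}\left(\int_a^b \frac{1}{f(t)}\Delta t\right)^{-\alpha},
\end{equation*}
where the direction of the inequality is dictated by the range of $\gamma$.

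Next I would translate the hypotheses on $\alpha$ into hypotheses on $\gamma$, and here the two regimes of the present statement match exactly the two regimes of Corollary~\ref{cor1}. If $\alpha>0$ then $\gamma<0$, and if $\alpha<-1$ then $\gamma>1$, so in both cases Corollary~\ref{cor1} yields $\int_a^b f(t)^\alpha\Delta t\geq (b-a)^{1+\alpha}\bigl(\int_a^b \frac{1}{f(t)}\Delta t\bigr)^{-\alpha}$; whereas if $-1<\alpha<0$ then $0<\gamma<1$, which gives the reverse inequality.

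To finish, I would multiply the resulting inequality by $\bigl(\int_a^b \frac{1}{f(t)}\Delta t\bigr)^{\alpha}$. Because $\int_a^b \frac{1}{f(t)}\Delta t>0$ and a positive base raised to any real exponent is again positive, this factor is strictly positive regardless of the sign of $\alpha$; hence the multiplication preserves the direction of the inequality, while the right-hand side collapses to $(b-a)^{1+\alpha}$, which is precisely the claim. The equality statement comes along for free: Corollary~\ref{cor1} gives equality exactly when $g$ is constant, which (since $f=1/g$) is equivalent to $f$ being constant, and multiplying through by a positive quantity does not affect the equality case.

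I do not expect a serious obstacle, since the proof is essentially a one-line reduction. The only point requiring care --- and the one place a careless argument could slip --- is the bookkeeping of how the three intervals $\alpha>0$, $\alpha<-1$, and $-1<\alpha<0$ map under $\gamma=-\alpha$ onto the strictly convex and strictly concave regimes of Corollary~\ref{cor1}, together with the related observation that $\alpha=-1$ is deliberately excluded: it corresponds to the linear case $\gamma=1$, for which the left-hand side is identically $\bigl(\int_a^b \frac{1}{f(t)}\Delta t\bigr)^{-1}\int_a^b \frac{1}{f(t)}\Delta t=1=(b-a)^0$, so there is no strict inequality to assert there.
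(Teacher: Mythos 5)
Your proposal is correct and follows exactly the paper's own argument, which reads in full: ``This follows from Corollary~\ref{cor1} by replacing $f$ by $1/f$ and $\alpha$ by $-\alpha$.'' You have simply spelled out the bookkeeping (the correspondence of the $\alpha$-ranges under $\gamma=-\alpha$, the positivity of the multiplying factor, and the equality case) that the paper leaves implicit.
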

\begin{proof}
This follows from Corollary \ref{cor1} by replacing $f$ by $1/f$
and $\alpha$ by $-\alpha$.
\end{proof}

\begin{cor}\label{cor3}
If $f:[a,b]^\kappa_{\mathbb{T}}\rightarrow\mathbb{R}$ is
rd-continuous, then
\begin{equation}\label{in3}
\int_a^b e^{f(t)}\Delta
t\geq(b-a)e^{\frac{1}{b-a}\int_a^bf(t)\Delta t} \, .
\end{equation}
Moreover, equality in \eqref{in3} holds if and only if $f$ is constant.
\end{cor}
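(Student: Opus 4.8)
The plan is to apply Corollary~\ref{cor0} to the exponential function, in exactly the same spirit as the preceding corollaries specialize it to power functions. First I would set $F(x)=e^{x}$ and compute $F''(x)=e^{x}$, which is strictly positive for every $x$; thus $F$ satisfies the hypothesis $F''>0$ of Corollary~\ref{cor0}, so in particular $F$ is strictly convex. A minor preliminary point to dispatch is the domain requirement: Corollary~\ref{cor0} is phrased for $f$ taking values in a finite interval $(c,d)$, whereas here $f$ maps into all of $\mathbb{R}$. This is harmless, because $f$ is rd-continuous on the compact set $[a,b]^{\kappa}_{\mathbb{T}}$ and is therefore bounded, so one may choose finite $c,d$ enclosing the range of $f$ and invoke the corollary with that $(c,d)$.

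With this choice of $F$, Corollary~\ref{cor0} yields at once
$$
\frac{\int_a^b e^{f(t)}\Delta t}{b-a}\geq e^{\frac{1}{b-a}\int_a^b f(t)\Delta t},
$$
and multiplying both sides by $b-a>0$ produces precisely inequality~\eqref{in3}. For the equality assertion, I would simply note that since $F''(x)=e^{x}>0$ throughout, the strict-convexity clause of Corollary~\ref{cor0} is in force, and it states that equality in the displayed inequality --- equivalently in \eqref{in3} --- holds if and only if $f$ is constant.

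I do not expect any genuine obstacle here: the statement is essentially a one-line specialization of the machinery already assembled, and the only step requiring a moment's attention is verifying that the domain hypothesis on $F$ is met, which the boundedness of $f$ settles immediately.
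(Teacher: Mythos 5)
Your proposal is correct and coincides with the paper's own proof, which simply invokes Corollary~\ref{cor0} with $F(x)=e^{x}$. Your extra remark about choosing a finite interval $(c,d)$ containing the (bounded) range of $f$ is a sensible clarification of a point the paper leaves implicit.
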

\begin{proof}
Choose $F(x)=e^x$, $x\in\mathbb{R}$, in Corollary~\ref{cor0}.
\end{proof}

\begin{cor}\label{cor4}
If $f:[a,b]^\kappa_{\mathbb{T}}\rightarrow\mathbb{R}$ is
rd-continuous and positive, then
\begin{equation}\label{in4}
\int_a^b \ln(f(t))\Delta t\leq(b-a)\ln\left({\frac{1}{b-a}\int_a^b
f(t)\Delta t}\right) \, .
\end{equation}
Moreover, equality in \eqref{in4} holds if and only if $f$ is constant.
\end{cor}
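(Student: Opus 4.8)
The plan is to treat this exactly as the logarithmic counterpart of Corollary~\ref{cor3}: whereas that result fed the convex exponential into Corollary~\ref{cor0}, here I would feed in the concave logarithm and invoke the "resp." (concave) branch. So the whole argument should reduce to a single application of Corollary~\ref{cor0} followed by multiplication by the positive factor $b-a$.

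Concretely, first I would set $F(x)=\ln x$ and record that it is defined and smooth on $(0,\infty)$, with
\begin{equation*}
F''(x)=-\frac{1}{x^{2}}<0,\quad x>0,
\end{equation*}
so $F$ is strictly concave there. The only hypothesis of Corollary~\ref{cor0} that needs a word of justification is that $f$ maps into an interval $(c,d)$ on which $F$ is available: since $f$ is positive and rd-continuous on the compact set $[a,b]^\kappa_{\mathbb{T}}$, its range lies in some $(c,d)\subset(0,\infty)$, and $\ln\colon(0,\infty)\to\mathbb{R}$ restricts to such a $(c,d)$. With that in place, the concave case of Corollary~\ref{cor0} applied to this $F$ yields
\begin{equation*}
\frac{\int_a^b \ln(f(t))\Delta t}{b-a}\leq
\ln\left(\frac{\int_a^b f(t)\Delta t}{b-a}\right).
\end{equation*}

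Multiplying both sides by $b-a>0$ then gives inequality~\eqref{in4} directly. For the equality statement I would simply quote the strict-concavity clause of Corollary~\ref{cor0}: because $F''<0$ on $(0,\infty)$, equality holds in the displayed estimate—and hence in \eqref{in4}—if and only if $f$ is constant.

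I do not expect a genuine obstacle here; the result is a routine specialization. The one point worth stating carefully (rather than skipping) is the domain check above, ensuring the range of $f$ sits inside an interval where $\ln$ and its second derivative are defined, so that Corollary~\ref{cor0} applies verbatim. Everything else is bookkeeping with the sign reversal for concave functions.
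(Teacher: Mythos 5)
Your proof is correct and follows exactly the paper's route: the paper likewise just sets $F(x)=\ln(x)$, $x>0$, and invokes Corollary~\ref{cor0} (concave branch, with the strict-concavity clause for the equality case). The extra domain check you include is harmless and sensible, but otherwise there is nothing to add.
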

\begin{proof}
Let $F(x)=\ln(x)$, $x>0$, in Corollary~\ref{cor0}.
\end{proof}

\begin{cor}\label{cor5}
If $f:[a,b]^\kappa_{\mathbb{T}}\rightarrow\mathbb{R}$ is
rd-continuous and positive, then
\begin{equation}
\label{eq:cor5}
\int_a^b f(t)\ln(f(t))\Delta t\geq\int_a^b f(t)\Delta
t\ln\left({\frac{1}{b-a}\int_a^b f(t)\Delta t}\right) \, .
\end{equation}
Moreover, equality in \eqref{eq:cor5} holds
if and only if $f$ is constant.
\end{cor}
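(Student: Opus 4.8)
The plan is to apply Corollary~\ref{cor0} to the strictly convex function $F(x)=x\ln(x)$, exactly in the spirit of the proofs of Corollaries~\ref{cor3} and~\ref{cor4}. First I would check that $F$ is admissible: since $f$ is rd-continuous and positive on the compact time-scale interval $[a,b]^\kappa_{\mathbb{T}}$, its range lies in some interval $(c,d)\subset(0,\infty)$, and on $(0,\infty)$ we have $F'(x)=\ln(x)+1$ and $F''(x)=1/x>0$, so $F$ is strictly convex there and Corollary~\ref{cor0} is available.

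Next I would invoke \eqref{in2} for this choice of $F$. Since $F(x)=x\ln(x)$, the right-hand side of \eqref{in2} becomes
\begin{equation*}
F\!\left(\frac{\int_a^b f(t)\Delta t}{b-a}\right)=\frac{\int_a^b f(t)\Delta t}{b-a}\,\ln\!\left(\frac{1}{b-a}\int_a^b f(t)\Delta t\right),
\end{equation*}
so that multiplying both sides of \eqref{in2} by the positive factor $b-a$ yields exactly \eqref{eq:cor5}.

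For the equality statement, I would use the final clause of Corollary~\ref{cor0}: because $F''>0$, equality in \eqref{in2} holds if and only if $f$ is constant, and scaling by $b-a>0$ preserves this, giving the equality characterization in \eqref{eq:cor5}. I do not expect any genuine obstacle here; the whole argument collapses to identifying the integrand $F(x)=x\ln(x)$ and verifying $F''(x)=1/x>0$. The only point needing mild care is ensuring the range of $f$ stays inside $(0,\infty)$, which is guaranteed by the positivity and rd-continuity of $f$ on a compact interval, so that $F$ and $F''$ are well defined on the relevant domain.
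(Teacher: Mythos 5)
Your proof is correct and is essentially identical to the paper's own argument: both apply Corollary~\ref{cor0} to $F(x)=x\ln(x)$ with $F''(x)=1/x>0$ and then multiply by $b-a$, with the equality case following from strict convexity. The extra remark about the range of $f$ lying in a subinterval of $(0,\infty)$ is a harmless bit of added care.
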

\begin{proof}
Let $F(x)=x\ln(x)$, $x>0$. Then, $F''(x)=1/x$, \textrm{i.e.},
$F''(x)>0$ for all $x>0$. By Corollary \ref{cor0}, we get
$$\frac{1}{b-a}\int_a^b f(t)\ln(f(t))\Delta
t\geq\frac{1}{b-a}\int_a^b f(t)\Delta
t\ln\left({\frac{1}{b-a}\int_a^b f(t)\Delta t}\right),$$ and the
result follows.
\end{proof}


\section{Applications to the Calculus of Variations}
\label{sec:app:CV}

We now show how the results of Section~\ref{ineq} can
be applied to determine the minimum or maximum of
problems of calculus of variations and optimal control on time scales.

\begin{thm}\label{thm3}
Let $\mathbb{T}$ be a time scale, $a$, $b\in\mathbb{T}$
with $a < b$, and $\varphi:\mathbb{R}\rightarrow\mathbb{R}$
be a positive and continuous function. Consider the functional
$${\mathcal F}(y(\cdot))=\int_a^b
\left[\left\{\int_0^1\varphi(y(t)+h\mu(t)y^\Delta(t))dh\right\}
y^\Delta(t)\right]^\alpha\Delta t \, , \quad
\alpha\in\mathbb{R}\backslash\{0,1\} \, ,$$
defined on all $C_{\textup{rd}}^1$-functions
$y:[a,b]_{\mathbb{T}}\rightarrow\mathbb{R}$ satisfying
$y^\Delta(t)>0$ on $[a,b]^\kappa_{\mathbb{T}}$, $y(a)=0$, and
$y(b)=B$. Let $G(x)=\int_0^x\varphi(s)ds$, $x\geq 0$,
and let $G^{-1}$ denote its inverse. Let
\begin{equation}
\label{eq0}
C=\frac{\int_0^B \varphi(s)ds}{b-a}.
\end{equation}
\begin{description}
\item[(i)] If $\alpha<0$ or $\alpha>1$,
then the minimum of ${\mathcal F}$ occurs when
$$y(t)=G^{-1}(C(t-a)),\quad t\in[a,b]_\mathbb{T},$$
and ${\mathcal F}_{\min}=(b-a)C^\alpha$.
\item[(ii)] If $0<\alpha<1$,
then the maximum of ${\mathcal F}$ occurs when
$$y(t)=G^{-1}(C(t-a)),\quad t\in[a,b]_\mathbb{T},$$
and ${\mathcal F}_{\max}=(b-a)C^\alpha$.
\end{description}
\end{thm}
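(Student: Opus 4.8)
The plan is to recognize that the elaborate integrand is nothing but a delta derivative in disguise, which collapses the functional into the precise shape handled by Corollary~\ref{cor1}. The crucial observation is the time scales chain rule: since $G'=\varphi$ and $G$ is continuously differentiable, for any delta-differentiable $y$ one has
$$[G(y(t))]^\Delta=\left\{\int_0^1\varphi\bigl(y(t)+h\mu(t)y^\Delta(t)\bigr)\,dh\right\}y^\Delta(t).$$
Hence the bracketed expression in the definition of $\mathcal{F}$ is exactly $[G(y(t))]^\Delta$. Setting $z(t)=G(y(t))$, the functional becomes $\mathcal{F}(y(\cdot))=\int_a^b\bigl(z^\Delta(t)\bigr)^\alpha\Delta t$, a form to which the power-function inequality applies.

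Next I would translate the admissibility data for $y$ into data for $z$. The boundary conditions $y(a)=0$ and $y(b)=B$ give $z(a)=G(0)=0$ and $z(b)=G(B)=\int_0^B\varphi(s)\,ds$, so that
$$\int_a^b z^\Delta(t)\,\Delta t=z(b)-z(a)=\int_0^B\varphi(s)\,ds=C(b-a).$$
Moreover $z^\Delta$ is positive, since both $\varphi$ and $y^\Delta$ are positive, and it is rd-continuous because $\varphi$ is continuous and $y\in C^1_{\textup{rd}}$. Thus $f=z^\Delta$ is a legitimate integrand for Corollary~\ref{cor1}.

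Applying Corollary~\ref{cor1} with $f=z^\Delta$ then finishes the argument. In the case $\alpha<0$ or $\alpha>1$,
$$\mathcal{F}(y(\cdot))=\int_a^b\bigl(z^\Delta(t)\bigr)^\alpha\Delta t\ge(b-a)^{1-\alpha}\left(\int_a^b z^\Delta(t)\,\Delta t\right)^\alpha=(b-a)^{1-\alpha}\bigl(C(b-a)\bigr)^\alpha=(b-a)C^\alpha,$$
with equality if and only if $z^\Delta$ is constant; for $0<\alpha<1$ the inequality reverses and yields the maximum. In either case equality forces $z^\Delta\equiv C$, because its constant value must coincide with the average $C$ computed above. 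Integrating from $a$ with $z(a)=0$ gives $z(t)=C(t-a)$, that is $G(y(t))=C(t-a)$, and therefore $y(t)=G^{-1}(C(t-a))$, with extremal value $(b-a)C^\alpha$, as claimed in both (i) and (ii).

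I do not anticipate a genuine obstacle once the chain-rule identification is made; the substance of the theorem is entirely contained in that single recognition together with Corollary~\ref{cor1}. The one point demanding care is the rigorous justification that $z=G\circ y$ is delta-differentiable with rd-continuous derivative, so that the substitution is valid and $z^\Delta$ qualifies as an admissible positive, rd-continuous integrand. This is exactly where the hypotheses that $\varphi$ is positive and continuous and that $y$ is $C^1_{\textup{rd}}$ with $y^\Delta>0$ are consumed.
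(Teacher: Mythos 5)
Your proposal is correct and follows essentially the same route as the paper: both arguments rest on the time-scales chain rule (\cite[Theorem~1.90]{livro}) identifying the bracketed integrand with $(G\circ y)^\Delta$, combined with Corollary~\ref{cor1} and its equality case; you merely perform the identification before invoking the inequality, whereas the paper applies the inequality first and then evaluates $\int_a^b(G\circ y)^\Delta\,\Delta t=G(B)$. The only point you leave implicit --- that the candidate extremal $y(t)=G^{-1}(C(t-a))$ is itself admissible, i.e.\ satisfies $y^\Delta>0$ --- is likewise deferred by the paper to a separate remark.
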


\begin{rem}
Since $\varphi$ is continuous and positive, $G$ and $G^{-1}$ are
well defined.
\end{rem}
\begin{rem}
In cases $\alpha=0$ or $\alpha=1$ there is nothing to minimize
or maximize, \textrm{i.e.}, the problem of extremizing
${\mathcal F}(y(\cdot))$
is trivial. Indeed, if $\alpha=0$, then
${\mathcal F}(y(\cdot))=b-a$; if $\alpha=1$,
then it follows from \cite[Theorem~1.90]{livro} that
\begin{align*}
{\mathcal F}(y(\cdot))&=\int_a^b
\left\{\int_0^1\varphi(y(t)+h\mu(t)y^\Delta(t))dh\right\}
y^\Delta(t)\Delta t\\ &=\int_a^b (G\circ
y)^\Delta(t)\Delta t\\ &=G(B) \, .
\end{align*}
In both cases ${\mathcal F}$ is a constant
and does not depend on the function $y$.
\end{rem}

\begin{proof}[Proof of Theorem~\ref{thm3}]
Suppose that $\alpha<0$ or $\alpha>1$.
Using Corollary~\ref{cor1}, we can write
\begin{equation*}
\begin{split}
{\mathcal F}(y(\cdot)) &\geq
(b-a)^{1-\alpha}\left[\int_a^b\left\{\int_0^1\varphi(y(t)+h\mu(t)y^\Delta(t))dh\right\}
y^\Delta(t)\Delta
t\right]^\alpha\\
&=(b-a)^{1-\alpha}(G(y(b))-G(y(a)))^\alpha \, ,
\end{split}
\end{equation*}
where the equality holds if and only if
$$\left\{\int_0^1\varphi(y(t)+h\mu(t)y^\Delta(t))dh\right\}
y^\Delta(t)=c \quad \mbox{for some}\ c\in\mathbb{R},\quad
t\in[a,b]^\kappa_\mathbb{T}.$$ Using \cite[Theorem~1.90]{livro},
we arrive at
$$(G\circ y)^\Delta(t)=c.$$
Delta integrating from $a$ to $t$ yields (note that $y(a)=0$ and
$G(0)=0$)
$$G(y(t))=c(t-a),$$
from which we get
$$y(t)=G^{-1}(c(t-a)).$$
The value of $c$ is obtained
using the boundary condition $y(b)=B$:
$$c=\frac{G(B)}{b-a}=\frac{\int_0^B \varphi(s)ds}{b-a}=C,$$
with $C$ as in \eqref{eq0}. Finally, in this case
$${\mathcal F}_{\min}=\int_a^b C^\alpha\Delta t=(b-a)C^\alpha.$$
The proof of the second part of the theorem is done analogously
using the second part of Corollary~\ref{cor1}.
\end{proof}

\begin{rem}
We note that the optimal solution found in the proof of the
previous theorem satisfies $y^\Delta>0$. Indeed,
\begin{align*}
y^\Delta(t)&=\left(G^{-1}(C(t-a))\right)^\Delta\\
&=\int_0^1 (G^{-1})'[C(t-a)+h\mu(t)C]dh\ C\\ &>0,
\end{align*}
because $C>0$ and $(G^{-1})'(G(x))=\frac{1}{\varphi(x)}>0$
for all $x\geq 0$.
\end{rem}

\begin{thm}
Let $\varphi:[a,b]^\kappa_\mathbb{T}\rightarrow\mathbb{R}$ be a
positive and rd-continuous function. Then, among all
$C_{\textup{rd}}^1$-functions
$y:[a,b]_{\mathbb{T}}\rightarrow\mathbb{R}$ with $y(a)=0$ and
$y(b)=B$, the functional
$${\mathcal F}(y(\cdot))
=\int_a^b\varphi(t)e^{y^\Delta(t)}\Delta t$$
has minimum value ${\mathcal F}_{\min}=(b-a)e^C$
attained when
$$y(t)=-\int_a^t\ln(\varphi(s))\Delta s+C(t-a),\quad
t\in[a,b]_\mathbb{T},$$
where
\begin{equation}
\label{eq1}
C=\frac{\int_a^b\ln(\varphi(t))\Delta
t+B}{b-a} \, .
\end{equation}
\end{thm}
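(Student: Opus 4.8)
The plan is to mimic the structure of the proof of Theorem~\ref{thm3}, invoking the exponential inequality from Corollary~\ref{cor3} as the main tool. The key observation is that the functional involves $e^{y^\Delta(t)}$ weighted by $\varphi(t)$, so I would first absorb the weight into the exponent by writing $\varphi(t)e^{y^\Delta(t)} = e^{\ln(\varphi(t)) + y^\Delta(t)}$. Setting $g(t) = \ln(\varphi(t)) + y^\Delta(t)$, which is rd-continuous since $\varphi$ is positive and rd-continuous, the functional becomes $\mathcal{F}(y(\cdot)) = \int_a^b e^{g(t)}\Delta t$, and Corollary~\ref{cor3} applies directly.

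First I would apply Corollary~\ref{cor3} to obtain
$$\int_a^b e^{g(t)}\Delta t \geq (b-a)\, e^{\frac{1}{b-a}\int_a^b g(t)\Delta t},$$
with equality if and only if $g$ is constant. The next step is to evaluate the exponent. Using linearity of the delta integral and the boundary conditions, I compute
$$\frac{1}{b-a}\int_a^b g(t)\Delta t = \frac{1}{b-a}\left(\int_a^b \ln(\varphi(t))\Delta t + \int_a^b y^\Delta(t)\Delta t\right) = \frac{\int_a^b \ln(\varphi(t))\Delta t + (y(b)-y(a))}{b-a},$$
and since $y(b)-y(a) = B$, this equals exactly the constant $C$ defined in \eqref{eq1}. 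Thus $\mathcal{F}(y(\cdot)) \geq (b-a)e^C$ for every admissible $y$.

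It remains to identify the minimizer and confirm it attains this bound. Equality in Corollary~\ref{cor3} forces $g(t) = \ln(\varphi(t)) + y^\Delta(t)$ to be constant, say equal to $C$ (the value is pinned down by the boundary data, exactly as in the exponent computation above). Solving $y^\Delta(t) = C - \ln(\varphi(t))$ and delta-integrating from $a$ to $t$ with $y(a)=0$ gives
$$y(t) = \int_a^t \big(C - \ln(\varphi(s))\big)\Delta s = -\int_a^t \ln(\varphi(s))\Delta s + C(t-a),$$
which is the claimed extremal. One checks that this $y$ indeed satisfies $y(b)=B$ by substituting $t=b$ and using the definition \eqref{eq1} of $C$, and that for this $y$ the functional equals $(b-a)e^C$, so the lower bound is attained and the minimum value is $\mathcal{F}_{\min}=(b-a)e^C$.

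I do not anticipate a serious obstacle, since the argument is a clean specialization of the exponential inequality once the weight is folded into the exponent. The only point requiring mild care is the equality-case bookkeeping: one must verify that forcing $g$ constant is consistent with both boundary conditions and yields precisely the value $C$, rather than merely some undetermined constant. This is handled automatically because integrating a constant $y^\Delta$-profile against the fixed endpoint data determines the constant uniquely, just as the value of $c$ was fixed via $y(b)=B$ in the proof of Theorem~\ref{thm3}.
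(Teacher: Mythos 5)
Your proposal is correct and follows essentially the same route as the paper: rewrite $\varphi(t)e^{y^\Delta(t)}=e^{\ln(\varphi(t))+y^\Delta(t)}$, apply Corollary~\ref{cor3} to the rd-continuous exponent, use $\int_a^b y^\Delta(t)\Delta t=B$ to identify the lower bound $(b-a)e^C$, and extract the minimizer from the equality case together with the boundary condition $y(b)=B$. No gaps; the equality-case bookkeeping you flag is handled exactly as in the paper.
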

\begin{proof}
By Corollary \ref{cor3},
\begin{multline*}
{\mathcal F}(y(\cdot))=\int_a^b e^{\ln(\varphi(t))+y^\Delta(t)}\Delta t\\
\geq(b-a)e^{\frac{1}{b-a}\int_a^b
[\ln(\varphi(t))+y^\Delta(t)]\Delta
t}=(b-a)e^{\frac{1}{b-a}\left[\int_a^b \ln(\varphi(t))\Delta
t+B\right]} \, ,
\end{multline*}
with ${\mathcal F}(y(\cdot))=(b-a)e^{\frac{1}{b-a}\left[\int_a^b \ln(\varphi(t))
\Delta t+B\right]}$ if and only if
\begin{equation}
\label{eq:prf:d}
\ln(\varphi(t))+y^\Delta(t)=c \quad \mbox{for some}\
c\in\mathbb{R},\quad
t\in[a,b]^\kappa_\mathbb{T} \, .
\end{equation}
Integrating \eqref{eq:prf:d} from $a$ to
$t$ (note that $y(a)=0$) gives
$$
y(t)=-\int_a^t\ln(\varphi(s))\Delta s+c(t-a),\quad
t\in[a,b]_\mathbb{T} \, .
$$
Using the boundary condition $y(b)=B$ we have
$$c=\frac{\int_a^b\ln(\varphi(t))\Delta t+B}{b-a}=C,$$
with $C$ as in \eqref{eq1}. A simple calculation shows that
${\mathcal F}_{\min}=(b-a)e^C$.
\end{proof}

\begin{thm}
\label{thm7} Let
$\varphi:[a,b]^\kappa_\mathbb{T}\rightarrow\mathbb{R}$ be a
positive and rd-continuous function. Then, among all
$C_{\textup{rd}}^1$-functions
$y:[a,b]_{\mathbb{T}}\rightarrow\mathbb{R}$ satisfying
$y^\Delta>0$, $y(a)=0$, and $y(b)=B$, with
\begin{equation}
\label{in5}
\frac{B+\int_a^b\varphi(s)\Delta
s}{b-a}>\varphi(t),\quad t\in[a,b]_\mathbb{T}^\kappa \, ,
\end{equation}
the functional
$${\mathcal F}(y(\cdot))
=\int_a^b[\varphi(t)+y^\Delta(t)]\ln[\varphi(t)+y^\Delta(t)]\Delta
t$$
has minimum value ${\mathcal F}_{\min}=(b-a)C\ln(C)$
attained when
$$y(t)=C(t-a)-\int_a^t\varphi(s)\Delta s,\quad
t\in[a,b]_\mathbb{T} \, ,
$$
where
\begin{equation}
\label{eq2}
C=\frac{B+\int_a^b\varphi(s)\Delta s}{b-a} \, .
\end{equation}
\end{thm}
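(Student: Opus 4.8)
The plan follows exactly the same template used in the two preceding theorems: recognize the integrand as an instance of one of the integral inequalities from Section~\ref{ineq}, apply the inequality to obtain a lower bound, then identify the equality case and solve the resulting first-order condition subject to the boundary data. Here the relevant tool is Corollary~\ref{cor5}, which concerns the function $F(x)=x\ln(x)$. The key observation is that the functional
\[
{\mathcal F}(y(\cdot))=\int_a^b[\varphi(t)+y^\Delta(t)]\ln[\varphi(t)+y^\Delta(t)]\Delta t
\]
has integrand $g(t)\ln(g(t))$ with $g(t):=\varphi(t)+y^\Delta(t)$. Since $\varphi>0$ and $y^\Delta>0$, we have $g>0$ and $g$ is rd-continuous, so Corollary~\ref{cor5} applies directly to the function $g$ in place of $f$.

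First I would set $g(t)=\varphi(t)+y^\Delta(t)$ and apply Corollary~\ref{cor5} to get
\[
{\mathcal F}(y(\cdot))\geq\int_a^b g(t)\Delta t\;\ln\!\left(\frac{1}{b-a}\int_a^b g(t)\Delta t\right).
\]
Next I would evaluate $\int_a^b g(t)\Delta t=\int_a^b\varphi(t)\Delta t+\int_a^b y^\Delta(t)\Delta t=\int_a^b\varphi(t)\Delta t+(y(b)-y(a))=\int_a^b\varphi(s)\Delta s+B$, using the fundamental theorem of calculus on time scales together with $y(a)=0$ and $y(b)=B$. Hence $\frac{1}{b-a}\int_a^b g(t)\Delta t=C$ with $C$ as in \eqref{eq2}, and the right-hand side becomes $(b-a)C\ln(C)$, giving ${\mathcal F}(y(\cdot))\geq(b-a)C\ln(C)$. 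By Corollary~\ref{cor5}, equality holds if and only if $g$ is constant, i.e. $\varphi(t)+y^\Delta(t)=c$ for some constant $c$ on $[a,b]_\mathbb{T}^\kappa$.

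To finish, I would solve the equality condition $y^\Delta(t)=c-\varphi(t)$ by delta-integrating from $a$ to $t$, using $y(a)=0$, to obtain $y(t)=c(t-a)-\int_a^t\varphi(s)\Delta s$. The boundary condition $y(b)=B$ then forces $c(b-a)-\int_a^b\varphi(s)\Delta s=B$, so $c=\frac{B+\int_a^b\varphi(s)\Delta s}{b-a}=C$, matching \eqref{eq2}. Substituting back gives the claimed minimizer and ${\mathcal F}_{\min}=(b-a)C\ln(C)$. The one point requiring genuine care rather than routine verification is the admissibility of the candidate: I must check that the minimizer actually satisfies the constraint $y^\Delta>0$. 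Since $y^\Delta(t)=C-\varphi(t)$, this is precisely where hypothesis \eqref{in5} enters — it guarantees $C>\varphi(t)$ for all $t\in[a,b]_\mathbb{T}^\kappa$, hence $y^\Delta(t)>0$, so the proposed extremal lies in the admissible class and the infimum is genuinely attained. This is the analogue of the closing remark after Theorem~\ref{thm3}, and verifying it is the main (though still mild) obstacle.
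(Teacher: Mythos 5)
Your proposal is correct and follows essentially the same route as the paper's own proof: apply Corollary~\ref{cor5} to $\varphi+y^\Delta$, evaluate $\int_a^b[\varphi(t)+y^\Delta(t)]\Delta t=\int_a^b\varphi(s)\Delta s+B$ via the boundary data, characterize equality by $\varphi(t)+y^\Delta(t)=c$, integrate and fix $c=C$, and invoke \eqref{in5} to confirm $y^\Delta=C-\varphi>0$. No gaps.
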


\begin{proof}
By Corollary~\ref{cor5},
\begin{equation*}
\begin{split}
{\mathcal F}(y(\cdot)) &\geq
\int_a^b [\varphi(t)+y^\Delta(t)]\Delta
t\ln\left({\frac{1}{b-a}\int_a^b [\varphi(t)+y^\Delta(t)]\Delta
t}\right)\\
&= \left(\int_a^b \varphi(t)\Delta
t+B\right)\ln\left({\frac{\int_a^b \varphi(t)\Delta
t+B}{b-a}}\right)
\end{split}
\end{equation*}
with ${\mathcal F}(y(\cdot)) = \left(\int_a^b \varphi(t)\Delta
t+B\right)\ln\left({\frac{\int_a^b \varphi(t)\Delta
t+B}{b-a}}\right)$ if and only if
$$\varphi(t)+y^\Delta(t)=c \quad \mbox{for some}\
c\in\mathbb{R},\quad
t\in[a,b]^\kappa_\mathbb{T}.$$ Upon integration from $a$ to $t$
(note that $y(a)=0$),
$$y(t)=c(t-a)-\int_a^t\varphi(s)\Delta s,
\quad t\in[a,b]_\mathbb{T}.$$
Using the boundary condition $y(b)=B$, we have
$$c=\frac{B+\int_a^b\varphi(s)\Delta s}{b-a}=C,$$
where $C$ is as in \eqref{eq2}. Note that with this choice of $y$
we have, using \eqref{in5}, that $y^\Delta(t)=C-\varphi(t)>0$,
$t\in[a,b]^\kappa_\mathbb{T}$. It follows that
${\mathcal F}_{\min}=(b-a)C\ln(C)$.
\end{proof}


\section{An Example}
\label{sec:ex}

Let $\mathbb{T}=\mathbb{Z}$, $a=0$, $b=5$, $B=25$ and
$\varphi(t)=2t+1$ in Theorem~\ref{thm7}:
\begin{example}
The functional
$${\mathcal F}(y(\cdot))
=\sum_{t=0}^4[(2t+1)+(y(t+1)-y(t))]\ln[(2t+1)+(y(t+1)-y(t))],$$
defined for all $y:[0,5]\cap\mathbb{Z}\rightarrow\mathbb{R}$ such
that $y(t+1)>y(t)$ for all $t\in[0,4]\cap\mathbb{T}$, attains its
minimum when
$$y(t)=10t-t^2,\quad t\in[0,5]\cap\mathbb{Z},$$
and ${\mathcal F}_{\min}=50\ln(10)$.
\end{example}
\begin{proof}
First we note that $\max\{\varphi(t):t\in[0,4]\cap\mathbb{Z}\}=9$.
Hence
$$\frac{B+\sum_{k=0}^4\varphi(k)}{b-a}=\frac{25+25}{5}=10>9\geq\varphi(t).$$
Observing that since, when $\mathbb{T}=\mathbb{Z}$, $(t^2)^\Delta=2t+1$,
we just have to invoke Theorem~\ref{thm7} to get the desired
result.
\end{proof}

\begin{rem}
There is an inconsistency in \cite[Theorem~3.6]{wsc}
due to the fact that the bound on the functional $I$
considered there is not constant. For example,
let $a=A=1$, $\varphi(x)=x+1$, and $\tilde{y}(x)=x$ for all $x\in[0,1]$.
Then the hypotheses of \cite[Theorem~3.6]{wsc} are satisfied.
Moreover,
$$I(\tilde{y}(x))=\int_0^1\ln(\varphi(x)\tilde{y}'(x))dx
=\left[(x+1)(\ln(x+1)-1)\right]_{x=0}^{x=1}=2\ln(2)-1\approx
0.386.$$
According to \cite[Theorem~3.6]{wsc}, the maximum of the
functional $I$ is given by $I_{\max}=-\ln(C)$, where
$$C=\frac{1}{A}\int_0^1\frac{1}{\varphi(x)}dx.$$
A simple calculation shows that $C=\ln(2)$. Hence
$I_{\max}=-\ln(\ln(2))\approx 0.367$. Therefore,
$I(\tilde{y}(x))>I_{\max}$.
\end{rem}



\end{document}